\documentclass[a4paper,12pt]{article}
\usepackage{amsthm}
\usepackage{amsmath}
\usepackage{amsfonts}
\usepackage{amssymb}
\title{Homomorphisms between fundamental groups of K\"ahler manifolds}
\author{Botong Wang}
\begin{document}
\maketitle
\begin{abstract}
We construct a group homomorphism, which can be realized as the induced homomorphism of fundamental groups from a holomorphic map between compact K\"ahler manifolds, but can not be realized by a holomorphic map between smooth projective varieties. And it is also proved that there exists no such example between abelian groups.
\end{abstract}
\section{Introduction}

\newtheorem{thm}{Theorem}[section]
\newtheorem{lemma}[thm]{Lemma}

The fundamental groups of compact K\"ahler manifolds and smooth complex projective varieties have been studied for more than twenty years. Arapura had a survey paper \cite{Arapura} on this subject, and a more detailed introduction was given in the book \cite{ABC}. A natural and unsolved question is whether the class of fundamental groups of compact K\"ahler manifolds and the class of fundamental groups of smooth complex projective varieties coincide. In \cite{Voisin}, Voisin gave examples of compact K\"ahler manifolds of dimension four or higher, which do not have the homotopy type of any smooth complex projective variety. In this paper, we use Voison's method to answer a weaker question. If one considers not only the fundamental groups, but also the homomorphisms between them, we can give a negative answer. 

First, we need some definitions. A group is called \textit{K\"ahler} (or \textit{projective}) if it is isomorphic to the fundamental group of a compact K\"ahler manifold (or smooth complex projective variety). A homomorphism between K\"ahler (or projective) groups is called \textit{K\"ahler} (or \textit{projective}) if it is the homomorphism of fundamental groups induced by a holomorphic map between compact K\"ahler manifolds (or smooth complex projective varieties). The main result of this paper is the following

\begin{thm}
There exists a K\"ahler homomorphism $f: A\to B$ which is not projective. However, if $f$ is K\"ahler, $A$ and $B$ are both abelian groups, then $f$ must be projective.
\end{thm}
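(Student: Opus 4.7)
The plan is to handle the two claims of the theorem separately.

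\emph{First claim (non-projective K\"ahler homomorphism).} Following Voisin's method from \cite{Voisin}, I would start from a complex torus $T$ equipped with an endomorphism $\phi \in \operatorname{End}(T)$ whose rational action on $H^1(T,\mathbb{Q})$ admits no compatible polarization on any torus isogenous to $T$; such $(T,\phi)$ exist once $\dim T$ is large enough. From $\phi$ one constructs compact K\"ahler manifolds $X, Y$ and a natural holomorphic map $g: X \to Y$ --- for instance by blowing up products of copies of $T$ along subvarieties derived from the graph of $\phi$ --- arranged so that the induced homomorphism $f = g_*: \pi_1(X) \to \pi_1(Y)$ encodes $\phi$ on a distinguished sublattice. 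To prove $f$ is not projective, assume a holomorphic realization $g': X' \to Y'$ between smooth projective varieties exists. Passing to Albanese varieties, $g'$ descends to a morphism of polarized abelian varieties whose action on $H_1$ coincides with $\phi$, which would endow $\phi$ with a polarization-compatible structure and contradict the choice of $\phi$.

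\emph{Second claim (abelian case).} Given a K\"ahler homomorphism $f: A \to B$ with $A, B$ finitely generated abelian, the strategy is to build explicit smooth projective models $X_A, X_B$ with $\pi_1(X_A) \cong A$, $\pi_1(X_B) \cong B$, and a holomorphic map $X_A \to X_B$ inducing $f$. Take the models to be products of an abelian variety with a simply connected smooth projective variety, and then quotient by a free finite group action to install the desired torsion in $\pi_1$. Decomposing $f$ along the short exact sequence $0 \to A_{\mathrm{free}} \to A \to A_{\mathrm{tor}} \to 0$, the restriction to $A_{\mathrm{free}}$ is realized by a morphism of abelian varieties once one chooses complex structures on the underlying real vector spaces so that $f_\mathbb{R}$ is holomorphic and both sides are polarizable (possible because the kernel and image of $f_\mathbb{R}$ can be arranged as rational Hodge substructures). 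The torsion part is handled by choosing the finite group actions to be $f$-equivariant, and the two pieces are glued into a single holomorphic map realizing $f$.

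\emph{Main obstacles.} In the first claim, the delicate point is independence of the argument from the choice of projective models $(X', Y')$: any smooth projective $X'$ with $\pi_1(X') \cong A$ has Albanese variety determined up to isogeny by $A$, and one must verify that the descent of $g'$ to Albanese varieties faithfully recovers the algebraic data of $\phi$, not merely its underlying $\mathbb{Z}$-linear action. In the second claim, the subtle step is gluing the free and torsion components of $f$ into a single holomorphic map --- in particular, ensuring that the chosen finite group action on the simply connected factor is compatible with the chosen morphism of abelian varieties --- which requires enough flexibility in both building blocks. Both obstacles are Hodge-theoretic at bottom: the first uses the absence of a compatible polarization on $(T, \phi)$, while the second uses the existence of polarizable complex structures making $f_\mathbb{R}$ holomorphic.
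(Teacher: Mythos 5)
Your plan for the first claim has a fatal gap, and the second half of the theorem itself is what exposes it. If you build $X$ and $Y$ by blowing up products of copies of $T$, then $\pi_1(X)$ and $\pi_1(Y)$ are free abelian (blowups do not change the fundamental group), and the induced homomorphism $f=g_*$ is nothing but a $\mathbb{Z}$-linear map between lattices. By the abelian half of the theorem, any such Kähler homomorphism is automatically projective: one is free to discard the complex structure of $T$ entirely and re-realize $f$ by a morphism of products of elliptic curves. Your own ``main obstacles'' paragraph senses this --- you worry that a projective realization might recover only the ``underlying $\mathbb{Z}$-linear action'' of $\phi$ rather than its algebraic data --- but that worry is not a technical wrinkle to be smoothed over; it is the reason the construction cannot stay inside abelian fundamental groups. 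The paper's essential idea, which is missing from your proposal, is to rigidify with a finite nonabelian symmetry: one takes $(T\times T\times T\times\tilde W)/S_3$, where $W$ is a compact Kähler manifold with $\pi_1(W)\cong S_3$ acting by permuting the three torus factors, and maps $T\times T$ in via $(a,b)\mapsto(a+b,\phi(a)+b,b)$. The target group is then $(\mathbb{Z}^{2n})^3\rtimes S_3$, and in any holomorphic realization $\nu:X\to Y$ the $S_3$-action lifts to the $6$-fold cover $\tilde Y$ and acts on $\mathrm{Alb}(\tilde Y)$; the fixed loci $\Gamma_1,\Gamma_2,\Sigma$ and the image $\Gamma_4$ of the Albanese map are then honest complex subtori, forced by the group data alone to sit in the configuration ``graph of $\phi$ inside $\overline{\Gamma_1}\oplus\overline{\Gamma_2}$.'' This recovers $\phi$ as an endomorphism of the complex torus $\overline{\Gamma_1}$, whatever complex structure $Y$ carries, and Voisin's criterion (irreducible characteristic polynomial, no real roots, full symmetric Galois group --- a purely integral condition) then forbids $\overline{\Gamma_1}$, hence $Y$, from being projective. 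Without the $S_3$ there is nothing forcing the projective realization to remember $\phi$ as a holomorphic endomorphism, and indeed it need not.

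Your treatment of the abelian case is essentially the paper's: realize the free part by a morphism of abelian varieties and install torsion by a free finite group action, using Serre's observation that finite groups are projective. Two points you leave implicit deserve emphasis. First, the enabling fact is that the Kähler hypothesis forces $\mathrm{Ker}$, $\mathrm{Im}$, and $\mathrm{Coker}$ of $f$ to have even rank (compatibility of $\varphi^*$ with the Hodge decomposition on $H^1$ plus Poincaré duality); this is exactly what makes it possible to choose complex structures rendering $f_{\mathbb{R}}$ holomorphic, and the paper does this very concretely via the Smith normal form, factoring $f$ as a projection onto a direct summand, an isogeny, and an inclusion of a direct summand, all realized on products of elliptic curves. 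Second, the ``gluing'' of the free and torsion parts, which you flag as subtle, is handled in the paper by a clean graph trick: for $h:A\to B\times C$ with $C$ finite and $p_1\circ h$ already realized, one passes to the finite cover of $X\times Z$ corresponding to the graph subgroup $G(h_2)\le A\times C$, which stays compact Kähler. You would do well to make that step explicit rather than asserting the pieces glue.
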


I thank my advisor Prof. Donu Arapura who asked me this question and gave me many helpful suggestions.
\section{Between abelian groups}
In this section, we prove that for a homomorphism in the category of abelian groups, the condition of being K\"ahler is same as being projective.

Let me start with some basic properties about K\"ahler groups and K\"ahler homomorphisms.
\begin{lemma}\label{even}
The abelianization of any K\"ahler group has even rank.
\end{lemma}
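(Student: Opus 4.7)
The plan is to use the standard identification of the abelianization with first homology and then invoke Hodge theory. Let $G$ be a K\"ahler group, so $G \cong \pi_1(X)$ for some compact K\"ahler manifold $X$. By the Hurewicz theorem, the abelianization $G^{\mathrm{ab}}$ is isomorphic to $H_1(X,\mathbb{Z})$, so its rank equals the first Betti number $b_1(X) = \dim_{\mathbb{C}} H^1(X,\mathbb{C})$.

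Next I would appeal to the Hodge decomposition on the compact K\"ahler manifold $X$, which gives
\[
H^1(X,\mathbb{C}) = H^{1,0}(X) \oplus H^{0,1}(X),
\]
together with the conjugation symmetry $\overline{H^{1,0}(X)} = H^{0,1}(X)$. This forces $\dim H^{1,0}(X) = \dim H^{0,1}(X)$, hence $b_1(X) = 2\dim H^{1,0}(X)$ is even, and the lemma follows.

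There is essentially no obstacle here: once one allows the use of the Hodge decomposition, the argument is immediate. The only subtlety worth flagging is that the conclusion depends on $X$ being K\"ahler rather than merely compact complex, since on a general complex manifold the Fr\"olicher spectral sequence need not degenerate at $E_1$ and complex conjugation need not interchange $H^{1,0}$ and $H^{0,1}$; the example of the Hopf surface, with $b_1 = 1$, shows that the evenness of the first Betti number genuinely uses the K\"ahler hypothesis.
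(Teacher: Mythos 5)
Your proof is correct and follows the same route as the paper: identify the rank of the abelianization with $b_1(X)$ via the Hurewicz isomorphism, then use the Hodge decomposition of $H^1(X,\mathbb{C})$ and conjugation symmetry to conclude that $b_1(X)$ is even. You simply spell out the Hodge-theoretic step that the paper cites in one line, and your remark about the Hopf surface correctly locates where the K\"ahler hypothesis is used.
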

\begin{proof}
By Hodge theory, the first Betti number of a compact K\"ahler manifold must be even. And for any manifold $X$, 
$$H_1(X,\mathbb{C})\cong \frac{\pi_1(X)}{[\pi_1(X),\pi_1(X)]}\otimes \mathbb{C}$$
\end{proof}
\begin{lemma}
The kernel, cokernel and image of a K\"ahler homomorphism must have even ranks. More precisely, suppose $f:A\to B$ is K\"ahler, and
$$\bar{f}:\frac{A}{[A,A]}\to \frac{B}{[B,B]}$$
is the abelianization of $f$, then $Ker(\bar{f})$, $Coker(\bar{f})$ and $Im(\bar{f})$ all have even rank.
\end{lemma}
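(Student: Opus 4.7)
The plan is to translate everything into a statement about cohomology and then invoke Hodge theory. Fix a holomorphic map $\phi : X \to Y$ between compact K\"ahler manifolds inducing $f$ on fundamental groups. Since $H_1(X,\mathbb{C})$ is the abelianization of $\pi_1(X)$ tensored with $\mathbb{C}$, the map $\bar f \otimes \mathbb{C}$ is identified with $\phi_* : H_1(X,\mathbb{C}) \to H_1(Y,\mathbb{C})$, and the ranks of $\ker(\bar f)$, $\operatorname{Coker}(\bar f)$, $\operatorname{Im}(\bar f)$ agree with the $\mathbb{C}$-dimensions of the kernel, cokernel, image of $\phi_*$. So it suffices to show these three dimensions are even.

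Next I would dualize. The universal coefficient theorem identifies $H^1(X,\mathbb{C})$ with $\operatorname{Hom}_{\mathbb{C}}(H_1(X,\mathbb{C}),\mathbb{C})$, and under this identification $\phi^* : H^1(Y,\mathbb{C}) \to H^1(X,\mathbb{C})$ is the transpose of $\phi_*$. By elementary linear algebra over $\mathbb{C}$, the kernel, cokernel and image of $\phi^*$ have the same $\mathbb{C}$-dimensions as the cokernel, kernel and image of $\phi_*$, respectively. Hence it is enough to prove that $\dim_{\mathbb{C}} \ker \phi^*$, $\dim_{\mathbb{C}} \operatorname{Coker}\phi^*$ and $\dim_{\mathbb{C}} \operatorname{Im}\phi^*$ are all even.

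The heart of the argument is Hodge theory applied to $\phi^*$. Because $X$ and $Y$ are compact K\"ahler, there are Hodge decompositions
$$H^1(X,\mathbb{C}) = H^{1,0}(X) \oplus H^{0,1}(X), \qquad H^1(Y,\mathbb{C}) = H^{1,0}(Y) \oplus H^{0,1}(Y),$$
and in each case the two summands are exchanged by complex conjugation coming from the real structure on $H^1(-,\mathbb{R})$. Since $\phi$ is holomorphic, $\phi^*$ preserves the Hodge type, so it splits as $\phi^* = \phi^{1,0} \oplus \phi^{0,1}$ with $\phi^{0,1} = \overline{\phi^{1,0}}$. Therefore $\ker\phi^* = \ker\phi^{1,0} \oplus \ker\phi^{0,1}$, and these two summands are complex conjugate, hence of equal dimension; so $\dim_{\mathbb{C}}\ker\phi^*$ is even. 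The same argument applied to image and cokernel finishes the proof.

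I do not expect a serious obstacle: the only points requiring care are the book-keeping between $H_1$ and $H^1$ under duality (to match up kernel with cokernel correctly) and the observation that conjugation really does identify $\ker\phi^{1,0}$ with $\ker\phi^{0,1}$, which follows because the real structure on $H^1(Y,\mathbb{R})$ is preserved by $\phi^*$.
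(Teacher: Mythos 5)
Your proposal is correct and follows essentially the same route as the paper's (much terser) proof: identify $\bar f\otimes\mathbb{C}$ with $\phi_*$ on $H_1(-,\mathbb{C})$, dualize to $\phi^*$ on $H^1(-,\mathbb{C})$, and use that $\phi^*$ respects the Hodge decomposition together with complex conjugation to get even dimensions for kernel, image and cokernel. The only (cosmetic) difference is that you invoke the universal coefficient theorem for the duality between $H^1$ and $H_1$ where the paper cites Poincar\'e duality; your attribution is in fact the more accurate one.
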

\begin{proof}
This is because a holomorphic map between K\"ahler manifolds is compatible with the Hodge structure. In fact, suppose $\varphi:X\to Y$ is such a holomorphic map, then
$$\varphi^*:H^1(Y,\mathbb{C})\to H^1(X,\mathbb{C})$$
preserves the Hodge decomposition. And by Poincare duality, $\varphi^*:H^1(Y,\mathbb{C})\to H^1(X,\mathbb{C})$ is dual to $\varphi_*:H_1(X,\mathbb{C})\to H_1(Y,\mathbb{C})$. Therefore by the isomorphism in Lemma \ref{even}, the result follows.
\end{proof}
\begin{lemma}\label{serre}
Every finite group is projective, hence K\"ahler.
\end{lemma}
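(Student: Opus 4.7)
The plan is to invoke Serre's classical construction of smooth projective varieties with prescribed finite fundamental group. First I would fix a faithful finite-dimensional complex representation $G \hookrightarrow GL(V)$; for instance, take $V$ to be a sufficiently large direct sum of copies of the regular representation, so that for every non-identity $g \in G$ the fixed locus of $g$ on $\mathbb{P}(V)$ is a proper linear subspace of arbitrarily high codimension. Writing $Z \subset \mathbb{P}(V)$ for the union of these fixed loci as $g$ ranges over $G \setminus \{1\}$, the group $G$ acts freely on $\mathbb{P}(V) \setminus Z$, and $\mathrm{codim}(Z)$ can be made as large as we wish by enlarging $V$.

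Next I would cut out, inside $\mathbb{P}(V)$, a smooth $G$-invariant complete intersection $X$ of some dimension $d \geq 2$ whose codimension in $\mathbb{P}(V)$ is strictly less than $\mathrm{codim}(Z)$. The existence of such an $X$ follows from a $G$-equivariant Bertini argument applied to generic $G$-invariant hypersurfaces of appropriately high degree. Two properties of $X$ then do all the work: since $\mathrm{codim}(X) < \mathrm{codim}(Z)$, a generic such $X$ is disjoint from $Z$, and hence $G$ acts freely on $X$; and since $X$ is a smooth complete intersection of dimension at least two in projective space, the Lefschetz hyperplane theorem (applied iteratively to the defining hypersurfaces) gives $\pi_1(X) = 1$.

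Finally, because $G$ acts freely on the simply connected smooth projective variety $X$, the quotient $Y = X/G$ is again smooth projective, and the covering space $X \to Y$ identifies $\pi_1(Y)$ with $G$. This proves that $G$ is projective, and the ``hence K\"ahler'' clause is immediate since every smooth projective variety is a compact K\"ahler manifold. I expect the main technical point to be the simultaneous control of the two constraints on $X$, namely $\dim X \geq 2$ (needed for Lefschetz) together with $\mathrm{codim}(X) < \mathrm{codim}(Z)$ (needed for the free action); both can be arranged by first enlarging $V$ to push $\mathrm{codim}(Z)$ past any prescribed bound, and only then choosing the degrees and number of defining equations of $X$.
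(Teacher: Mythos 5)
The route you take is Serre's classical construction, and it is exactly the argument the paper has in mind: the paper offers no proof of its own, only the attribution to Serre and the reference to \cite{Shafarevich}. So the architecture of your argument (faithful representation, removal of the fixed loci, a smooth invariant complete intersection of dimension at least two, Lefschetz, free quotient) is the right one. However, the numerical condition you impose to guarantee that $X$ misses $Z$ is stated backwards, and as written it breaks the construction. Two subvarieties of $\mathbb{P}^N$ can be generically disjoint only when $\dim X + \dim Z < N$, i.e.\ when $\dim X < \mathrm{codim}\,Z$; your condition $\mathrm{codim}\,X < \mathrm{codim}\,Z$ is the reflection of this and is neither necessary nor sufficient. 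Worse, it is typically incompatible with disjointness: if $G$ contains an involution $g$ and $V$ is $k$ copies of the regular representation, the two eigenspaces of $g$ each have dimension $\tfrac12 \dim V$, so $\mathrm{codim}\,Z$ is at most about $\tfrac12 N$; demanding $\mathrm{codim}\,X < \mathrm{codim}\,Z$ then forces $\dim X > \tfrac12 N \ge \mathrm{codim}\,Z > \dim$ of nothing useful --- concretely $\dim X + \dim Z \ge N$, so $X$ must meet $Z$ by the projective dimension theorem. Your closing remark that one first enlarges $V$ to push $\mathrm{codim}\,Z$ up and only then chooses $X$ also works only for the corrected inequality: enlarging $V$ increases $N$ and hence $\mathrm{codim}\,X = N - \dim X$ at least as fast.

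The fix is small and standard: fix $\dim X = 2$ once and for all, and arrange $\mathrm{codim}\,Z \ge 3$, which holds since every component of $Z$ has codimension at least $k|G|(1 - 1/\mathrm{ord}(g)) \ge k|G|/2$, so a single copy of the regular representation already suffices once $|G| \ge 6$ and finitely many copies suffice in general. Then the $G$-invariant linear system of degree $d \gg 0$ is base point free (invariant polynomials of large degree separate $G$-orbits), Bertini gives smooth invariant members, and $N-2 \ge \dim Z + 1$ successive generic members empty out $Z$, so a generic invariant complete intersection surface $X$ avoids $Z$. The rest of your argument --- Lefschetz gives $\pi_1(X)=1$, the free action gives a smooth projective quotient $Y = X/G$ with $\pi_1(Y) \cong G$, and projective implies K\"ahler --- is correct. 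One cosmetic point: the fixed locus of a single $g$ in $\mathbb{P}(V)$ is a union of projectivized eigenspaces, not a single linear subspace, though this does not affect anything.
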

This is observed by Serre, a proof can also be found in \cite{Shafarevich} page 227. There is a similar result for K\"ahler homomorphisms:
\newtheorem{prop}[thm]{Proposition}
\begin{lemma}
Suppose $f:A\to B$ is a K\"ahler homomorphism between K\"ahler groups, and $C$ is a finite group. If $h:A\to B\times C$ satisfies $p_1\circ h=f$, where $p_1$ is the projection to first component, then $h$ is K\"ahler. In particular, any homomorphism between K\"ahler groups is K\"ahler, if the target group is finite.
\end{lemma}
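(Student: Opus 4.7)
The plan is to combine a given holomorphic realization of $f$ with a Serre-type realization of $g := p_2 \circ h \colon A \to C$ through a twisted product that keeps the total fundamental group equal to $A$. First I would choose a holomorphic map $\varphi \colon X \to Y$ between compact K\"ahler manifolds realizing $f$ on $\pi_1$. Using the construction behind Lemma~\ref{serre}, I would also produce a simply connected smooth projective variety $W$ equipped with a free holomorphic action of $C$, so that $Z := W/C$ is a smooth projective variety with $\pi_1(Z) = C$ and $W \to Z$ is a principal $C$-bundle.

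Next, to realize $g$ geometrically I would form the finite \'etale cover
\[
E := (\widehat X \times C)/A, \qquad a \cdot (x, c) = (a \cdot x,\, g(a) c),
\]
where $\widehat X$ is the universal cover of $X$. The group $C$ then acts freely on $E$ (by right multiplication on the second factor) with $E/C = X$. I then set
\[
M := (E \times W)/C
\]
for the diagonal free $C$-action. Averaging a product K\"ahler form over the finite group $C$ yields a $C$-invariant K\"ahler form that descends to $M$, so $M$ is a compact K\"ahler manifold, and the two projections give holomorphic maps $\pi_X \colon M \to X$, a bundle with simply connected fiber $W$, and $\pi_Z \colon M \to Z$.

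The final step is to compute the induced maps on $\pi_1$. Because $W$ is simply connected, the long exact sequence of the fibration $\pi_X$ gives $(\pi_X)_* \colon \pi_1(M) \to A$ an isomorphism, so $\varphi \circ \pi_X$ realizes $f$. A short lifting argument in the universal cover $\widehat X \times W$ of $M$ (on which $A$ acts by $a \cdot (x,w) = (a \cdot x,\, g(a) \cdot w)$, up to a convention choice in the definition of $E$) shows that the loop corresponding to $a \in A$ is sent by $\pi_Z$ to the class of $g(a)$ in $\pi_1(Z) = C$. Consequently the holomorphic map $(\varphi \circ \pi_X,\, \pi_Z) \colon M \to Y \times Z$ induces exactly $h = (f,g)$, which proves the main claim. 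The final ``in particular'' clause then drops out by specializing to $Y$ a point and $f$ the trivial homomorphism, which is tautologically K\"ahler. The only step calling for real care is the verification $(\pi_Z)_* = g$, but this is forced by the way $E$ was built from $g$, so I do not foresee a genuine obstacle.
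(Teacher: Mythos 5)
Your construction is correct and is essentially the paper's own proof in different packaging: your $M=(E\times W)/C$ is canonically the quotient of $\widehat X\times W$ by $A$ acting via $a\cdot(x,w)=(a\cdot x, g(a)\cdot w)$, which is exactly the paper's finite cover $\tilde X\times\tilde Z/G(h_2)$ of $X\times Z$ associated to the graph of $g=p_2\circ h$, with your $W$ playing the role of the universal cover $\tilde Z$. The only cosmetic differences are that the paper deduces K\"ahlerness from being a finite cover of $X\times Z$ rather than by averaging a form, and that it takes $Z$ to be any compact K\"ahler manifold with $\pi_1=C$ rather than insisting on Serre's projective model.
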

\begin{proof}
Suppose $A$, $B$ and $C$ are the fundamental groups of compact K\"ahler manifolds $X$, $Y$ and $Z$, and $f$ is represented by a holomorphic map $\varphi: X\to Y$, i.e., $\varphi_*:\pi_1(X)\to\pi_1(Y)$ is isomorphic to $f$.

Let $\tilde{X}$, $\tilde{Y}$ and $\tilde{Z}$ be the universal covers of $X$, $Y$ and $Z$, then $A$, $B$ and $C$ act on $\tilde{X}$, $\tilde{Y}$ and $\tilde{Z}$ respectively. Let $h_i=p_i\circ h$, for $i=1,2$, then $h_1=f$. Denote by $G(h_2)=\{(x,h_2(x))\in A\times C|x\in A\}$ the graph of $h_2$. It is a subgroup of $A\times C$ with finite index, so we can take the quotient of $\tilde{X}\times\tilde{Z}$ by this subgroup, which has a further quotient:
$$\tilde{X}\times\tilde{Z}/G(h_2)\stackrel{\pi}{\rightarrow} \tilde{X}\times\tilde{Z}/(A\times C)=X\times Z$$
Since $\pi$ is a finite covering map, $\tilde{X}\times\tilde{Z}/G(h_2)$ is K\"ahler and compact.
Composing $\pi$ with $(f,id)$, we get
$$\tilde{X}\times\tilde{Z}/G(h_2)\stackrel{\pi}{\rightarrow} X\times Z\stackrel{(f,id)}{\longrightarrow} Y\times Z$$
The corresponding homomorphisms of fundamental groups are
$$G(h_2)\to A\times C\to B\times C$$
where $G(h_2)$ is isomorphic to $A$, and the composition is isomorphic to $h:A\to B\times C$. Hence $h$ is K\"ahler.\\
\end{proof}
Notice that a finitely generated abelian group is K\"ahler if and only if it is projective, if and only if it has even rank. In fact, given any finitely generated abelian group of even rank, it is a direct sum of free part and torsion part. Therefore, according to Lemma \ref{serre}, we can take the fiber product of an abelian variety and a smooth projective variety. And the fundamental group of the product is isomorphic to the given abelian group. We will prove that, for a homomorphism between abelian groups, the obstruction of being K\"ahler is the same.
\begin{prop}
Let $f:A\to B$ be a homomorphism of abelian K\"ahler groups, then the following three conditions are equivalent:

$(i)$$f$ has even rank, i.e., one of $Ker(f), Coker(f), Im(f)$ has even rank, hence all of them.

$(ii)$$f$ is K\"ahler.

$(iii)$$f$ is projective.
\end{prop}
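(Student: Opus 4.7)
The plan is to treat the three implications separately. Since every smooth projective variety is compact K\"ahler, (iii) $\Rightarrow$ (ii) is immediate, and (ii) $\Rightarrow$ (i) is an application of the preceding lemma on even ranks of kernel, image and cokernel of a K\"ahler homomorphism (for abelian $A,B$ the abelianization $\bar f$ is just $f$). All the work goes into proving (i) $\Rightarrow$ (iii). Writing $A = A_0 \oplus T_A$ and $B = B_0 \oplus T_B$ as free plus torsion, the free parts are of even rank by Lemma \ref{even}, and $f$ decomposes into components $f_{11}: A_0 \to B_0$, $f_{12}: A_0 \to T_B$, $f_{22}: T_A \to T_B$, with the fourth component forced to vanish. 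The hypothesis that $f$ has even rank is equivalent to the analogous statement for $f_{11}$.

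The first and technically hardest step is to realize $f_{11}$ as a holomorphic map between abelian varieties. I would split $A_0 = \ker(f_{11}) \oplus L$ (possible because $f_{11}(A_0)$ is free abelian) and $B_0 = \overline{f_{11}(A_0)} \oplus W$, where $\overline{(\,\cdot\,)}$ denotes the primitive saturation inside $B_0$. All four summands are free of even rank. Any even-rank lattice admits a polarizable complex structure, so I pick one on each of $\ker(f_{11})\otimes\mathbb{R}$, $\overline{f_{11}(A_0)}\otimes\mathbb{R}$, and $W\otimes\mathbb{R}$ independently, and then transport the complex structure and Riemann form from $\overline{f_{11}(A_0)}\otimes\mathbb{R}$ back to $L\otimes\mathbb{R}$ via the $\mathbb{R}$-linear isomorphism induced by $f_{11}|_L$. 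The pulled-back Riemann form remains integer-valued on $L$ precisely because $f_{11}(L) \subset \overline{f_{11}(A_0)}$. The direct sum of the summands yields abelian varieties $X_0, Y_0$ for which $f_{11}\otimes\mathbb{R}$ is complex linear, and hence descends to a holomorphic homomorphism $\varphi_{11}: X_0 \to Y_0$ inducing $f_{11}$.

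To incorporate the torsion I would pick smooth projective varieties $Z_A, Z_B$ with $\pi_1 = T_A, T_B$ (these exist by Lemma \ref{serre}) and form the ``graph'' subgroup
\[
G = \{(a, t, f_{12}(a) + f_{22}(t)) : a \in A_0,\, t \in T_A\} \;\subset\; A_0 \oplus T_A \oplus T_B,
\]
which is isomorphic to $A$ and of finite index $|T_B|$. Then $X := (\widetilde{X_0} \times \widetilde{Z_A} \times \widetilde{Z_B})/G$ is a finite \'etale cover of $X_0 \times Z_A \times Z_B$, hence smooth projective, with $\pi_1(X) \cong A$. I define $\Phi: X \to Y_0 \times Z_B$ as the descent of the $G$-equivariant holomorphic map $(x, z_A, z_B) \mapsto (\widetilde{\varphi_{11}}(x), z_B)$ on universal covers. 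Tracking $\pi_1$, the class $(a, t) \in A$ is sent to $(f_{11}(a), f_{12}(a) + f_{22}(t)) \in B_0 \oplus T_B = B$, which is exactly $f(a, t)$.

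The crucial obstacle is the polarization transport in the second paragraph: one must arrange that the various pieces of $A_0$ and $B_0$ carry compatible complex structures under which $f_{11}$ is holomorphic and the pullback Riemann form remains integral. The splitting using the primitive saturation $\overline{f_{11}(A_0)}$ is what allows the transport to work, and this is exactly where the even-rank hypothesis on all three of kernel, image and cokernel is used in an essential way. The graph-quotient construction in the third paragraph is then a mild variant of the argument from the last lemma and should go through routinely in the projective setting, since finite \'etale covers of smooth projective varieties are again smooth projective.
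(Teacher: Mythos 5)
Your proposal is correct, and it shares the paper's overall architecture---handle the torsion-free part first, realize it by a homomorphism of abelian varieties, then absorb the torsion via a graph-subgroup finite cover---but the central step is carried out by a genuinely different method. Where you split $A_0=\ker(f_{11})\oplus L$ and $B_0=\overline{f_{11}(A_0)}\oplus W$ and transport a complex structure and Riemann form from the saturation of the image back to $L$ along $f_{11}|_L$ (integrality of the pulled-back form coming from $f_{11}(L)\subset\overline{f_{11}(A_0)}$), the paper instead puts $f_{11}$ in Smith normal form $\mathrm{diag}(a_1,\dots,a_{2l},0,\dots,0)$, pairs consecutive invariant factors into lattices $a_{2j-1}\mathbb{Z}+a_{2j}i\mathbb{Z}\subset\mathbb{C}$, and factors the map as a projection to a direct factor, followed by an isogeny, followed by an inclusion as a direct factor, all between products of elliptic curves. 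Both constructions use the even-rank hypothesis at the same spot (an even number of nonzero invariant factors, respectively even rank for all four lattice summands), and both finish with the graph trick of the preceding lemma; your version writes that trick out explicitly in the projective category, which is worth keeping since the paper states the lemma only for K\"ahler manifolds and merely asserts the general case follows. Your route is basis-free and makes visible why primitivity of the saturated image matters; the paper's is more concrete, with polarizability of every intermediate torus immediate because everything is a product of elliptic curves. The one point you should spell out is that the transported structure $(L,J_L,E_L)$ satisfies the compatibility and positivity conditions of a polarization: this is automatic because $f_{11}|_L\otimes\mathbb{R}$ is a $J$-equivariant isomorphism onto $\overline{f_{11}(A_0)}\otimes\mathbb{R}$, but it is the step that turns your $X_0$ into an abelian variety rather than just a complex torus.
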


\begin{proof}
$(iii)\Rightarrow(ii)$ is trivial. $(ii)\Rightarrow(i)$ follows from Lemma 2.2.

For $(i)\Rightarrow (iii)$, suppose $f:A\to B$ is a homomorphism of abelian groups satisfying properties of $(i)$.

We first prove the case that $A$ and $B$ are both free of rank $2r$ and $2s$. Then $f$ is represented by a matrix, which we can assume to be a Smith normal form by a proper choice of basis. Hence $f$ factors into three steps, a projection to a direct summand, an injection with finite cokernel, an injection as a direct summand. Therefore, we will define a homomorphism $\phi:X\to Y$ between some abelian varieties as a composition of three steps, a projection to a direct summand, a finite covering map, and an injection as a direct summand. And the induced map of the fundamental groups $\phi_{*}:\pi(X)\to \pi(Y)$ will be isomorphic to $f$. 

Suppose the Smith normal form of $f$ is represented by $2r\times 2s$ matrix
$$\begin{pmatrix}
a_1&\ldots&0&\ldots&0\\
\vdots&\ddots&\vdots&&\vdots\\
0&\ldots&a_{2l}&\ldots&0\\
\vdots&&\vdots&&\vdots\\
0&\ldots&0&\ldots&0
\end{pmatrix}$$
We choose $X$ to be the $r$-dimensional torus $$\mathbb{C}^r/(a_1\mathbb{Z}+a_2i\mathbb{Z})+\ldots+(a_{2l-1}\mathbb{Z}+a_{2l}i\mathbb{Z})+(\mathbb{Z}+i\mathbb{Z})^{r-l}$$
The first step is projection to the torus of the first $l$ components\\
$$\mathbb{C}^l/(a_1\mathbb{Z}+a_2i\mathbb{Z})+\ldots+(a_{2l-1}\mathbb{Z}+a_{2l}i\mathbb{Z}))$$
In the second step, we take the covering map onto torus $\mathbb{C}^l/(\mathbb{Z}+i\mathbb{Z})^l$, as a further quotient. Then we take the embedding to $\mathbb{C}^s/(\mathbb{Z}+i\mathbb{Z})^s$, which will be our $Y$. All the tori we chose are abelian varieties, since they are products of elliptic curves.

The case that only $B$ is free follows from the previous case and Lemma \ref{serre}, since a homomorphism from a finite group to a free abelian group must be zero, and a finitely generated abelian group is a direct sum of it is free part and torsion part.

The general case follows from the previous case, and the previous lemma. 
\end{proof}
\section{Non-projective example}
In this section, we construct a K\"ahler homomorphism which is not projective.

In \cite{Voisin}, Voisin proved the following:
\begin{lemma}
Suppose an $n(\geq 2)$-dimensional complex torus $T$ has an endomorphism $\phi$, then $\phi_*$ acts on $H_1(T,\mathbb{Z})$. Let $p$ be the characteristic polynomial of $\phi_*$. If $p$ is irreducible, has no real roots and the Galois group of its splitting field acts as the symmetric group on the roots, then the torus $T$ is not an abelian variety.
\end{lemma}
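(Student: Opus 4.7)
My strategy is to argue by contradiction: assume $T$ is an abelian variety, and use the Galois hypothesis on $p$ to produce a number field inside $\mathrm{End}(T)\otimes\mathbb{Q}$ whose structure is incompatible with any polarization on $T$.

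First I would set up the relevant field. Via the faithful action of $\mathrm{End}(T)\otimes\mathbb{Q}$ on $H_{1}(T,\mathbb{Q})$, the subalgebra $K:=\mathbb{Q}[\phi_{*}]$ is a number field of degree $2n$, since the irreducible $p$ is forced to be the minimal polynomial of $\phi_{*}$. Because $\dim_{\mathbb{Q}}H_{1}(T,\mathbb{Q})=2n=[K:\mathbb{Q}]$, the vector space $H_{1}(T,\mathbb{Q})$ is one-dimensional over $K$, so $T$ carries an action of a number field of the maximal possible degree $2\dim T$. Next I would show that $\mathrm{Aut}(K/\mathbb{Q})$ is trivial using the Galois assumption. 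Let $L$ be the splitting field of $p$, so that $G=\mathrm{Gal}(L/\mathbb{Q})\cong S_{2n}$ acts as the full symmetric group on the $2n$ roots. The subfield $K$ corresponds to the point-stabilizer $H\cong S_{2n-1}$ of a chosen root, and $\mathrm{Aut}(K/\mathbb{Q})=N_{G}(H)/H$. The $2n$ point-stabilizers in $S_{2n}$ are conjugates of $H$, and the hypothesis $n\ge 2$ (so $2n\ge 4$) ensures they are pairwise distinct subgroups, whence $[G:N_{G}(H)]=2n=[G:H]$, so $N_{G}(H)=H$ and $\mathrm{Aut}(K/\mathbb{Q})=\{1\}$.

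Finally I would invoke the classical theory of complex tori with sufficiently many complex multiplications: a complex torus of dimension $n$ admitting an action of a number field of degree $2n$ is an abelian variety if and only if that field is a \emph{CM field}, i.e., a totally imaginary quadratic extension of a totally real subfield. Every CM field has complex conjugation as a nontrivial order-two automorphism, contradicting the triviality of $\mathrm{Aut}(K/\mathbb{Q})$ established above, and therefore $T$ cannot be an abelian variety. The main obstacle is the appeal to the CM classification in this last step; it can be made explicit via the positive Rosati involution on $\mathrm{End}(T)\otimes\mathbb{Q}$, using that the no-real-roots hypothesis already forces $K$ to be totally imaginary and that the Rosati involution must restrict to a nontrivial positive involution of $K$ whose fixed field is totally real of index two. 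The remaining pieces—the minimal-polynomial identification of $K$ and the elementary normalizer computation in $S_{2n}$—are routine.
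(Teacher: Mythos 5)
A preliminary remark: the paper does not prove this lemma at all---it is quoted from Voisin's paper [V] with only a citation---so your argument has to be judged against Voisin's, not against anything in this text. Your first two steps are fine: $p$ irreducible forces $K=\mathbb{Q}[\phi_*]$ to be a field of degree $2n$ with $H_1(T,\mathbb{Q})$ free of rank one over it, and the self-normalizing property of $S_{2n-1}$ in $S_{2n}$ gives $\mathrm{Aut}(K/\mathbb{Q})=\{1\}$. The gap is in the last step. The ``classical theory'' you invoke---that a complex torus of dimension $n$ carrying a field of degree $2n$ is an abelian variety if and only if that field is CM---is false in the ``only if'' direction, and your proposed repair via the Rosati involution assumes exactly the missing point: the Rosati involution is an involution of the whole algebra $\mathrm{End}(T)\otimes\mathbb{Q}$ and need not preserve the subfield $K$. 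Concretely, let $E$ be an elliptic curve with CM by $\mathbb{Q}(i)$ and $A=E^{3}$, so $\mathrm{End}(A)\otimes\mathbb{Q}=M_{3}(\mathbb{Q}(i))$. A generic cubic extension $K$ of $\mathbb{Q}(i)$ is a totally imaginary sextic field with $\mathrm{Aut}(K/\mathbb{Q})=\{1\}$ (hence not CM), and it acts on the $3$-dimensional abelian variety $A$ with $H_{1}(A,\mathbb{Q})$ free of rank one over it. So ``totally imaginary, trivial automorphism group, maximal degree'' does not obstruct projectivity; as written, your argument would ``prove'' that $E^{3}$ is not an abelian variety.

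The fix is that you have only used the Galois hypothesis to get $\mathrm{Aut}(K/\mathbb{Q})=\{1\}$, which is strictly weaker than what it gives you. Two-transitivity of $S_{2n}$ on the roots forces $\mathrm{End}(T)\otimes\mathbb{Q}=K$ exactly: any endomorphism is Galois-equivariant with respect to the eigenbasis of $\phi_*$ on $H_{1}(T,\overline{\mathbb{Q}})$ and preserves the splitting $H_{1}(T,\mathbb{C})=V\oplus\overline{V}$ given by the complex structure; since some off-diagonal matrix entry joining an eigenline in $V$ to one in $\overline{V}$ must vanish, two-transitivity kills all off-diagonal entries, leaving only the diagonal Galois-equivariant algebra, which is $K$. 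Only then does the Rosati involution of a putative polarization restrict to a positive involution of $K$, necessarily nontrivial since a field with trivial positive involution is totally real---and this contradicts $\mathrm{Aut}(K/\mathbb{Q})=\{1\}$. Note also that Voisin's own argument is more direct and bypasses endomorphism algebras entirely: a polarization would be a nonzero rational class in $H^{1,1}\cap\bigwedge^{2}H^{1}(T,\mathbb{Q})$; its coefficient on $e_{i}^{*}\wedge e_{j}^{*}$ vanishes whenever $\lambda_{i},\lambda_{j}$ both label $H^{1,0}$ (which occurs since $n\geq 2$), and transitivity of $S_{2n}$ on two-element subsets of roots then forces every coefficient to vanish, so $\mathrm{NS}(T)\otimes\mathbb{Q}=0$. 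Your route can be completed, but only after supplying the computation of the full endomorphism algebra, which is where the symmetric-group hypothesis does its real work.
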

She also gave the method to construct such a torus. A concrete example for $n=2$ can be found in \cite{Hu}.

Suppose $(T, \phi)$ satisfies the assumption of the Lemma, and $W$ is a compact K\"ahler manifold with $\pi_1(W)$ isomorphic to the symmetric group $S_3$. Denote the universal cover of $W$ by $\tilde{W}$. Let $S_3$ act on $(T\times T\times T)\times \tilde{W}$ by permutation on the first part and by deck transformation on the second. Since the action is fixed point free, we get a K\"ahler manifold as the quotient $(T\times T\times T)\times \tilde{W}/S_3$. Fixing any point $P$ on $\tilde{W}$, we define a sequence of maps:
$$T\times T\stackrel{\mu'}{\rightarrow}T\times T\times T\stackrel{i}{\rightarrow}T\times T\times T\times \tilde{W}\stackrel{\pi}{\rightarrow}T\times T\times T\times \tilde{W}/S_3$$
where we let the first map $\mu':(a,b)\mapsto (a+b, \phi(a)+b,b)$,  and $i$ is mapping to the fiber over $P\in \tilde{W}$, and $\pi$ is the quotient map.

Let $\mu$ be the composition $\pi\circ i\circ \mu'$, then the induced map $\mu_*$ of fundamental groups is K\"ahler. But it cannot be projective.

\begin{prop}
$\mu_*$ is not projective.
\end{prop}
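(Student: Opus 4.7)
The plan is to assume for contradiction that $\mu$ is realized by a holomorphic map $\psi:X\to Y$ of smooth projective varieties inducing $\mu_*$ on fundamental groups, and extract from this data a polarizable Hodge structure on $V:=H_1(T,\mathbb{Z})$ admitting $\phi_*$ as a Hodge endomorphism, contradicting the Voisin lemma above.

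First, since $\mu_*(\pi_1(X))\subset V^3\subset V^3\rtimes S_3\cong \pi_1(Y)$, I would pass to the $6$-fold Galois cover $Y'\to Y$ corresponding to the subgroup $V^3$, lifting $\psi$ to $\psi':X\to Y'$. The variety $Y'$ is smooth projective with $\pi_1(Y')=V^3$, and its Albanese $A:=\operatorname{Alb}(Y')$ is an abelian variety of dimension $3n$ with $H_1(A,\mathbb{Z})=V^3$, on which the deck $S_3$-action extends and acts by permutation of the three factors. Composing $\psi'$ with the Albanese map produces a morphism $\operatorname{Alb}(X)\to A$ of abelian varieties realizing $\mu_*$ on $H_1$.

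The next step is to exploit the $S_3$-equivariance of the polarizable Hodge structure $V^3_{\mathbb{Q}}=H_1(A,\mathbb{Q})$. Its $S_3$-isotypic decomposition gives two sub-Hodge structures: the trivial-rep diagonal $D_1=\{(v,v,v)\}$ and the standard-rep part $D_2^{\mathrm{std}}=\{x+y+z=0\}$. Since $D_2^{\mathrm{std}}\cong V\otimes \mathrm{std}$ as $S_3$-modules and $\mathbb{Q}[S_3]$ surjects onto $\operatorname{End}(\mathrm{std})=M_2(\mathbb{Q})$, the subring $1_V\otimes M_2(\mathbb{Q})$ lies inside $\operatorname{End}_{\mathrm{HS}}(D_2^{\mathrm{std}})$; the four matrix units then split $D_2^{\mathrm{std}}=H\oplus H$ as a sum of two canonically identified copies of a polarizable Hodge structure $H$ on $V_{\mathbb{Q}}$. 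A direct coordinate calculation identifies the sub-Hodge structure $\mu_*(V^2)_{\mathbb{Q}}\cap D_2^{\mathrm{std}}$ with the graph $\Gamma_{\psi}=\{(u,\psi u):u\in V_{\mathbb{Q}}\}$, where $\psi=(2\phi_*-1)/(2-\phi_*)\in K:=\mathbb{Q}[\phi_*]$. As a sub-Hodge structure of $H\oplus H$, this graph forces $\psi\in\operatorname{End}_{\mathrm{HS}}(H)\cap K$, a $\mathbb{Q}$-subalgebra of the number field $K$ and hence a subfield. The Galois hypothesis makes the Galois closure of $K$ equal $S_{2n}$, and $S_{2n-1}$ is maximal in $S_{2n}$ for $n\geq 2$, so $K$ has no proper intermediate subfield over $\mathbb{Q}$; since $\psi\notin\mathbb{Q}$ (otherwise $\phi_*$ would be scalar), we conclude $K\subseteq\operatorname{End}_{\mathrm{HS}}(H)$, and in particular $\phi_*\in\operatorname{End}_{\mathrm{HS}}(H)$.

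This gives the desired contradiction: the complex torus $V_{\mathbb{R}}/V_{\mathbb{Z}}$ equipped with Hodge structure $H$ is polarizable, hence an abelian variety, yet it admits $\phi_*$ as a Hodge endomorphism with the same Voisin-type characteristic polynomial $p$, in direct violation of Voisin's lemma. The main technical hurdle I expect is the coordinate computation locating $\mu_*(V^2)_{\mathbb{Q}}\cap D_2^{\mathrm{std}}$ inside the $V\oplus V$ decomposition of $D_2^{\mathrm{std}}$ and verifying that the resulting slope $\psi$ is not a rational scalar; once this is done, the Galois rigidity of $K$ and the application of the Voisin lemma finish the argument immediately.
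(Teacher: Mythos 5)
Your proposal is correct, and its overall strategy coincides with the paper's: pass to the $6$-fold cover corresponding to $V^3\subset V^3\rtimes S_3$, push everything into the Albanese (this is where projectivity enters, via polarizability), use the $S_3$-action to split off a rank-$2n$ polarizable weight-one Hodge structure on $V_{\mathbb{Q}}$ carrying an endomorphism with characteristic polynomial $p$, and contradict Voisin's lemma. The one genuinely different step is how $\phi_*$ is recovered as a Hodge endomorphism. The paper quotients $\mathrm{Alb}(\tilde{Y})$ by the diagonal $\Sigma$, writes the quotient as $\overline{\Gamma_1}\oplus\overline{\Gamma_2}$, and observes that the image $\overline{\Gamma_4}$ of $\mathrm{Alb}(\nu')$ is the graph of $\phi$ itself, so that $s_3\circ pr_2\circ pr_1^{-1}=\phi$ on the nose and Voisin's lemma applies immediately to $\overline{\Gamma_1}$. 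You instead intersect the image of $\mu_*$ with the standard isotypic piece and land on the graph of the M\"obius transform $\psi=(2\phi_*-1)(2-\phi_*)^{-1}$ (your computation is right: $b=-(1+\phi_*)a/3$ gives the point $\bigl((2-\phi_*)a/3,\,(2\phi_*-1)a/3,\,-(1+\phi_*)a/3\bigr)$), and you then need a second appeal to the Galois hypothesis --- $\mathrm{End}_{\mathrm{HS}}(H)\cap K$ is a subfield, $S_{2n-1}$ is maximal in $S_{2n}$ so $K$ has no intermediate subfields, and $\psi\notin\mathbb{Q}$ --- to upgrade $\psi$ to all of $K$ and hence to $\phi_*$. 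Both routes work; the paper's choice of decomposition avoids this extra field-theoretic step entirely. Two small points to nail down in a full write-up: the lift $\psi'$ to the cover is unique only up to deck transformations, so either fix it (as the paper does) or check that a different lift merely replaces $\psi$ by another non-rational element of $K$; and to quote Voisin's lemma verbatim you should choose a $\phi_*$-stable lattice in $H$ (e.g.\ $\mathbb{Z}[\phi_*]\cdot L$ for any lattice $L$, which works since $p$ is monic integral) so as to produce an honest complex torus rather than just a rational Hodge structure.
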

\begin{proof}
Suppose $\nu:X\to Y$ is a holomorphic map between compact K\"ahler manifolds, which induces the same homomorphism of fundamental groups as $\mu$. We will show that $Y$ cannot be projective.\\
Since 
$$\pi_1(Y)\cong\pi_1((T\times T\times T)\times\tilde{W}/S_3)\cong(\mathbb{Z}^{2n}\times\mathbb{Z}^{2n}\times\mathbb{Z}^{2n})\rtimes S_3$$
we can take $\tilde{Y}$ as the 6-fold covering of $Y$ corresponding to the surjection $\mathbb{Z}^{2n}\times\mathbb{Z}^{2n}\times\mathbb{Z}^{2n}\rtimes S_3\to S_3$. $S_3$ will act on $\tilde{Y}$. Notice the composition of $\mu_*$ and the surjection $\mathbb{Z}^{2n}\times\mathbb{Z}^{2n}\times\mathbb{Z}^{2n}\rtimes S_3\to S_3$ is trivial. Hence $\nu:X\to Y$ factors through $X\stackrel{\nu'}{\rightarrow}\tilde{Y}\to Y$. The choice of $\nu'$ is not unique, but we will specify which one we want later.

Consider the induced map of Albanese: $Alb(\nu'):Alb(X)\to Alb(\tilde{Y})$, where $Alb(\tilde{Y})$ has an $S_3$ action on it induced by the action on $\tilde{Y}$. Let $\Sigma$ be the fixed locus of $Alb(\tilde{Y})$ by this $S_3$ action. Then $\Sigma$ is an n-dimensional subtorus. Also let $\Gamma_1$, $\Gamma_2$ and $\Gamma_3$ be the fixed loci by the three order 2 elements $s_1=(23), s_2=(31), s_3=(12)$ of $S_3$ respectively. Therefore $\Gamma_i$ corresponds to the permutation $s_i\in S_3$ fixing the $i$-th $T$. Therefore, $s_3$ gives an isomorphism between $\Gamma_1$ and $\Gamma_2$. Let $\Gamma_4$ be the image of $Alb(\nu')$.

Since as maps of the fundamental groups $\mu_*$ is isomorphic to $\nu_*$, lifting to the 6-fold covering, $\mu'_*\cong (i\circ \mu')_*$ must be isomorphic to $\nu'_*$. Therefore as real Lie groups, we can identify $T\times T\times T$ with $Alb(\tilde{Y})$, and hence parametrize $Alb(\tilde{Y})$ by elements in $T$. For instance, $\Sigma=\{(b,b,b)\}$, $\Gamma_1=\{(a+b,b,b)\}$, $\Gamma_2=\{(b,a+b,b)\}$. We choose $\nu'$ in the way such that according to the above identification, $Alb(\nu')$ is same as $Alb(\mu')$. Then $\Gamma_4=\{(a+b,\phi(a)+b,b)\}$. Therefore we can take quotients of those subtori of $Alb(\tilde{Y})$ by $\Sigma$. Denote the quotients with a bar on top. Now we get n-dimensional subtori $\overline{\Gamma_i}$ in the 2n-dimensional torus $\overline{Alb}\stackrel{\mathrm{def}}{=}\overline{Alb(\tilde{Y})}$. Obviously, $\overline{\Gamma_1}\cap\overline{\Gamma_2}=0$, we have a decomposition $\overline{Alb}=\overline{\Gamma_1}\oplus\overline{\Gamma_2}$. Projection to each component restricting to $\overline{\Gamma_4}$ gives two homomorphisms of tori, $\overline{\Gamma_4}\stackrel{pr_i}{\rightarrow}\overline{\Gamma_i}$, $i=1,2$, and $pr_1$ is an isomorphism.

Since $s_3$ permutes $\Gamma_1$ and $\Gamma_2$, and fixes $\Sigma$, it induces an isomorphism between $\overline{\Gamma_1}$ and $\overline{\Gamma_2}$, which we also call $s_3$ by abusing of notation. Now, consider the composition:
$$\overline{\Gamma_1}\stackrel{pr_1^{-1}}{\longrightarrow}\overline{\Gamma_4}\stackrel{pr_2}{\longrightarrow}\overline{\Gamma_2}\stackrel{s_3}{\longrightarrow}\overline{\Gamma_1}$$
It is same as $\phi$ via identifying $\overline{\Gamma_1}$ with $T$ as real tori. Here $\overline{\Gamma_4}$ plays the role of the graph. 

By the previous lemma, $\overline{\Gamma_1}$ is not an abelian variety. Consequently, none of $\Gamma_1$, $Alb(\tilde{Y})$, $\tilde{Y}$ or $Y$ can be a projective variety. Therefore $\mu_*$ is not a projective homomorphism.
\end{proof}

\end{document}